\definecolor{verylight}{gray}{0.97}
\definecolor{light}{gray}{0.9}
\definecolor{medium}{gray}{0.85}
\definecolor{dark}{gray}{0.6}
 \def\NZQ{\mathbb}               % the font for N,Z,Q,R,C
 \def\NN{{\NZQ N}}
 \def\FF{{\NZQ F}}
 \def\frk{\mathfrak}               % font for "Fraktur"
 \def\mm{{\frk m}}
 \def\G{{\mathcal G}}
 \def\bb{{\mathbf b}}
 \def\xb{{\mathbf x}}
 \def\cb{{\mathbf c}}
 \def\opn#1#2{\def#1{\operatorname{#2}}} % to make operators
 \opn\chara{char} \opn\length{\ell} \opn\pd{pd} \opn\rk{rk}
 \opn\projdim{proj\,dim} \opn\injdim{inj\,dim} \opn\rank{rank}
 \opn\depth{depth} \opn\grade{grade} \opn\height{height}
 \opn\embdim{emb\,dim} \opn\codim{codim}
 \opn\Tr{Tr} \opn\bigrank{big\,rank}
 \opn\superheight{superheight}\opn\lcm{lcm}
 \opn\trdeg{tr\,deg}%\emph{
 \opn\reg{reg} \opn\lreg{lreg} \opn\ini{in} \opn\lpd{lpd}
 \opn\size{size} \opn\sdepth{sdepth}
 \opn\link{link}\opn\fdepth{fdepth}\opn\lex{lex}
 \opn\tr{tr}
 \opn\type{type}
 \opn\gap{gap}
 \opn\arithdeg{arith-deg}
 \opn\astab{astab}
  \opn\dstab{dstab}
  \opn\pol{pol}
 \opn\div{div} \opn\Div{Div} \opn\cl{cl} \opn\Cl{Cl}
 \opn\Spec{Spec} \opn\Supp{Supp} \opn\supp{supp} \opn\Sing{Sing}
 \opn\Ass{Ass} \opn\Min{Min}\opn\Mon{Mon}
 \opn\Ann{Ann} \opn\Rad{Rad} \opn\Soc{Soc}
 \opn\Im{Im} \opn\Ker{Ker} \opn\Coker{Coker} \opn\Am{Am}
 \opn\Hom{Hom} \opn\Tor{Tor} \opn\Ext{Ext} \opn\End{End}
 \opn\Aut{Aut} \opn\id{id}
 \opn\nat{nat}
 \opn\pff{pf}%   \pf exists already
 \opn\Pf{Pf} \opn\GL{GL} \opn\SL{SL} \opn\mod{mod} \opn\ord{ord}
 \opn\Gin{Gin} \opn\Hilb{Hilb}\opn\sort{sort}
 \opn\PF{PF}\opn\Ap{Ap}
 \opn\mult{mult}
 \opn\aff{aff}
 \opn\relint{relint} \opn\st{st}
 \opn\lk{lk} \opn\cn{cn} \opn\core{core} \opn\vol{vol}  \opn\inp{inp} \opn\nilpot{nilpot}
 \opn\link{link} \opn\star{star}\opn\lex{lex}\opn\set{set}
 \opn\width{wd}
 \opn\Fr{F}
 \opn\QF{QF}
 \opn\G{G}
 \opn\type{type}\opn\res{res}
 \opn\conv{conv}
 \opn\gr{gr}
 \def\pot#1#2{#1[\kern-0.28ex[#2]\kern-0.28ex]}
 \opn\dirlim{\underrightarrow{\lim}}
 \opn\inivlim{\underleftarrow{\lim}}
 \let\union=\cup
 \def\Implies{\ifmmode\Longrightarrow \else
         \unskip${}\Longrightarrow{}$\ignorespaces\fi}
 \def\implies{\ifmmode\Rightarrow \else
         \unskip${}\Rightarrow{}$\ignorespaces\fi}
 \def\iff{\ifmmode\Longleftrightarrow \else
         \unskip${}\Longleftrightarrow{}$\ignorespaces\fi}
 \newtheorem{Theorem}{Theorem}
 \newtheorem{Corollary}[Theorem]{Corollary}
 \let\epsilon\varepsilon
 \let\kappa=\varkappa
 \def\qed{\ifhmode\textqed\fi
       \ifmmode\ifinner\quad\qedsymbol\else\dispqed\fi\fi}
 \def\textqed{\unskip\nobreak\penalty50
        \hskip2em\hbox{}\nobreak\hfil\qedsymbol
        \parfillskip=0pt \finalhyphendemerits=0}
 \def\dispqed{\rlap{\qquad\qedsymbol}}
 \opn\dis{dis}
 \def\pnt{{\raise0.5mm\hbox{\large\bf.}}}
 \opn\Lex{Lex}
\begin{document}

\title {An upper bound for the regularity of powers of edge ideals}

\author {}

\address{J\"urgen Herzog, Fachbereich Mathematik, Universit\"at Duisburg-Essen, Campus Essen, 45117
Essen, Germany} \email{juergen.herzog@uni-essen.de}

\address{Takayuki Hibi, Department of Pure and Applied Mathematics, Graduate School of Information Science and Technology,
Osaka University, Toyonaka, Osaka 560-0043, Japan}
\email{hibi@math.sci.osaka-u.ac.jp}

\dedicatory{ }

\begin{abstract}
For a finite simple graph $G$ we give an upper  bound for the regularity of the powers of the edge ideal $I(G)$.
\end{abstract}

\thanks{}

\subjclass[2010]{Primary 13F20; Secondary  13H10}
%		13H10   	Special types (Cohen-Macaulay, Gorenstein, Buchsbaum, etc.)
%		13D02   	Syzygies, resolutions, complexes
%		05E40   	Combinatorial aspects of commutative algebra
%		16S36   	Ordinary and skew polynomial rings and semigroup rings

%		14M25   	Toric varieties, Newton polyhedra [See also 52B20]
%		13A02   	Graded rings
%		13F20   	Polynomial rings and ideals; rings of integer-valued polynomials
%		13A18   	Valuations and their generalizations
%		06A11   	Algebraic aspects of posets

\keywords{}

\maketitle

\setcounter{tocdepth}{1}
%\tableofcontents

In this note we  provide an upper  bound for the regularity of the powers of the edge ideal $I(G)$  of a finite simple graph $G$. A general lower bound is known by Beyarslan, H\`{a} and  Trung, see \cite[Theorem 4.5]{BHT}, while upper bounds are only known under additional assumptions, for example when $G$ is bipartite (\cite[Theorem 1.1]{JNS}. By Cutkosky, Herzog and Trung \cite{CHT} and Kodiyalam \cite{Ko} it is known that for  any graded ideal $I$ in $S=k[x_1,\ldots,x_n]$ there exist integers $a>0$ and $b\geq 0$ such that  $\reg I^s =as+b$ for all $s\gg 0$. In the case that $I$ is the edge ideal of a graph,  the constant $a$ is equal to $2$, so that $\reg I(G)^s=2s+b$ for $s\gg 0$. This result implies that there exists an integer  $c$ with  $\reg I(G)^s\leq 2s+c$ for all $s$.

In the following theorem we will see that one can choose  $c$  to be the dimension of the complex $\Delta(G)$ of stable sets of $G$. Recall that a subset $S$ of the vertex set $V(G)$ of $G$ is called a stable  (or independent) set, if no $2$-element subset of $G$ is an edge of $G$.

\begin{Theorem}
\label{main}
Let $G$ be a finite simple graph, and let $c$ be the dimension of the stable complex of $G$.  Then
\[
\reg I(G)^s\leq 2s+c \quad \text{for all } \quad s.
\]
\end{Theorem}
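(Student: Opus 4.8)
The plan is to reduce everything to the single clean fact that a Stanley--Reisner ring always satisfies $\reg(S/I_\Delta)\le\dim(S/I_\Delta)$, and then to propagate the case $s=1$ to higher powers by an induction on $s$ that replaces powers by colon ideals. Throughout I write $I=I(G)$, so that $I$ is the Stanley--Reisner ideal of the stable complex $\Delta(G)$ and $\dim(S/I)=\dim\Delta(G)+1=c+1$.

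First I would dispatch the case $s=1$, which is exactly the general fact above. By Hochster's formula $\beta_{i,W}(S/I)=\dim_k\tilde{H}_{|W|-i-1}(\Delta(G)_W;k)$, where $\Delta(G)_W$ is the subcomplex induced on $W\subseteq V(G)$. A nonzero group $\tilde{H}_{j}(\Delta(G)_W)$ forces $j\le\dim\Delta(G)_W\le\dim\Delta(G)=c$, so
\[
\reg(S/I)=1+\max\{\,j:\tilde{H}_{j}(\Delta(G)_W)\ne0\ \text{for some }W\,\}\le 1+c,
\]
that is, $\reg I\le 2+c$. This is the engine; the remaining work is to carry it to all $s$.

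For the inductive step I would fix $s\ge1$ and pass from $\reg I^{s}$ to $\reg I^{s+1}$ by a colon filtration. Letting $m_1,\dots,m_r$ be the minimal monomial generators of $I^{s}$, the chain $L_\ell=(m_1,\dots,m_\ell)I$ runs from $0$ to $I^{s+1}$, and its successive quotients satisfy $L_\ell/L_{\ell-1}\cong\bigl(I/(I\cap(L_{\ell-1}:m_\ell))\bigr)(-2s)$. Tracking these subquotients -- the technical heart of Banerjee's method for powers of edge ideals -- yields an inequality of the shape
\[
\reg I^{s+1}\le\max\Bigl(\{\reg I^{s}\}\cup\{\reg\bigl(I^{s+1}:m_\ell\bigr)+2s : 1\le\ell\le r\}\Bigr).
\]
With the inductive hypothesis $\reg I^{s}\le 2s+c$, the theorem for $s+1$ follows as soon as one has the uniform bound $\reg(I^{s+1}:m_\ell)\le c+2$, since then the right-hand side is at most $\max(2s+c,\ c+2+2s)=2(s+1)+c$.

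Thus everything comes down to bounding the colon ideals, and this is where I expect the real obstacle. By the even-connection description, $(I^{s+1}:m_\ell)$ is again generated in degree two: it equals $I(G')+(x_u^2:u\in W)$, where $G'$ is obtained from $G$ by adjoining an edge between each pair of vertices even-connected with respect to $m_\ell$, and $W$ is the set of vertices even-connected to themselves. Adjoining edges can only shrink stable sets, so $\dim\Delta(G')\le\dim\Delta(G)=c$, and one would like to conclude $\reg(I^{s+1}:m_\ell)\le c+2$ just as for $s=1$. The difficulty is the square generators $x_u^2$: they destroy squarefreeness, so neither Hochster's formula nor the clean estimate $\reg(S/J)\le\dim(S/J)$ applies verbatim -- already $(x^2)$ has $\reg=1>0=\dim$. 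Polarizing the squares restores squarefreeness but attaches pendant vertices that can enlarge the stable complex, so it is lossy: for a single triangle one checks $\reg(I^2:m)=2$ while the polarized bound yields only $3$, against the target $c+2=2$. I therefore expect the crux to be a direct local-cohomology/Hochster-type computation for $I(G')+(x_u^2:u\in W)$ that exploits the combinatorial origin of the loops -- each looped vertex $u$ is even-connected to itself and so cannot belong to the stable sets supporting the top reduced homology -- to show that the reduced homology of induced subcomplexes still vanishes above degree $c$. Granting this bound, the induction closes and $\reg I(G)^{s}\le 2s+c$ for all $s$.
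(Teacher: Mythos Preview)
Your proposal has a genuine gap, and you identify it yourself: the bound $\reg\bigl(I^{s+1}:m_\ell\bigr)\le c+2$ is never established. You correctly observe that the square generators $x_u^2$ coming from vertices even-connected to themselves obstruct a direct Hochster-type argument, and that polarizing the squares is lossy. The sentence ``Granting this bound, the induction closes'' is exactly where the proof stops being a proof. Banerjee's even-connection machinery is powerful, but to date it has only yielded sharp bounds on $\reg(I^{s+1}:m)$ under additional hypotheses on $G$ (bipartite, gap-free, etc.); for an arbitrary graph there is no known mechanism that forces the colon ideals to have regularity at most $c+2$, and your heuristic about looped vertices not lying in top-homology stable sets does not by itself control the graded Betti numbers of $I(G')+(x_u^2:u\in W)$.

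The paper avoids this obstacle entirely by a different route. Rather than filtering $I^{s+1}$ by colon ideals, it passes to the whisker graph $G^*$, whose edge ideal is the polarization of $J=(I(G),x_1^2,\ldots,x_n^2)$. The graph $G^*$ is very well-covered, and by the Jayanthan--Selvaraja theorem one has the \emph{exact} formula $\reg I(G^*)^s=2s+\nu(G^*)-1$ for all $s\ge 1$. A monomial restriction lemma then gives $\reg I(G)^s\le\reg I(G^*)^s$, and a short socle computation for $S/J$ identifies $\nu(G^*)-1$ with $c=\dim\Delta(G)$. Thus the inductive difficulty you face is absorbed into a single black-box result about very well-covered graphs, and no analysis of colon ideals is needed.
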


\begin{proof} The proof depends very much on a result  by Jayanthan and Selvaraja \cite{JS} for very well-covered graphs  which says that for any very well-covered graph  $G$ one has $\reg I(G)^s=s+\nu(G)-1$ for all $s\geq 1$, where $\nu(G)$ is the induced matching number of $G$. The same result was proved before by Norouzi, Seyed Fakhari, and Yassemi \cite{NFY} with an additional assumption.

Here we apply their theorem to the whisker graph $G^*$ of $G$. Recall that $G^*$ is obtained from $G$ by adding to each vertex of $G$ a leaf.  Thus if $V(G)=[n]$, then  $I(G^*)=(I(G), x_1y_1,\ldots,x_ny_n)\subset K[x_1,\ldots,x_n,y_1,\ldots,y_n]$. It is obvious that $G^*$ is a very well-covered graph. Indeed, since it is the polarization of the ideal $J=(I(G),x_1^2,\ldots,x_n^2)$,  it is a Cohen-Macaulay ideal, so that all maximal stable sets of $G^*$ have the same cardinality. On the other hand,  the vertices of $G$ form a maximal stable sets  of $G^*$.  This shows that all maximal stable sets  of $G$ have cardinality $n =|V(G^*)|/2$. Being very well-covered means exactly this. Thus by the above mentioned theorem Jayanthan et al we have $\reg I(G^*)^s=2s+\nu(G*)-1$.

Next we use the restriction lemma as given in \cite[Lemma 4.4]{HH}: let $I$ be a monomial ideal with multigraded (minimal) free resolution $\FF$,  and let $\cb\in \NN_\infty^n$, where $\NN_\infty=\NN\union\{\infty\}$. Then the restricted complex $\FF^{\leq \cb}$,  which is the subcomplex of $\FF$ for which $(\FF^{\leq\cb})_i$ is spanned by those basis elements of $\FF_i$ whose multidegree is componentwise less than or equal to $\cb$, is a (minimal) free resolution of the monomial ideal $I^{\leq\cb}$ which is generated by all monomials $\xb^\bb\in I$ with $\bb\leq \cb$, componentwise.

We choose $\cb=(\infty,\ldots,\infty,0,\ldots, 0)\in \NN_\infty^{2n}$ with $n$ components equal to $\infty$ and $n$ components equal to $0$. Then $(I(G^*)^s)^{\leq \cb}=I(G)^s$ for all $s$. Hence the restriction lemma implies that $\reg I(G)^s\leq \reg I(G^*)^s=2s+\nu(G^*)-1$ for all $s$. It remains to be shown that $\nu(G^*)=c+1$, where $c=\dim \Delta(G^*)$. In fact, since  $\ell =x_1-y_1,\ldots,x_n-y_n$ is a regular sequence and since $S/J$ is isomorphic to $K[x_1,\ldots,x_n,y_1,\ldots,y_n]/I(G^*)$ modulo $\ell$, it follows that $\reg J=\reg I(G^*)$.
Let  $\sigma$ be the maximal degree of a socle element of  $S/J$, then $\reg(J)=\deg \sigma +1$. The desired conclusion follows since $S/J$ has a $K$-basis  consisting elements $u+J$, where  $u=\prod_{i\in F}x_i$  and $F\in \Delta(G)$.
\end{proof}

\begin{Corollary}
\label{maybe}
Let $G$ be a finite simple  graph with $n$ vertices and $e$ edges. Then
\[
\reg I(G)^s\leq 2s+ \lfloor 1/2+\sqrt{1/4+n^2-n-2e}\rfloor-1   \quad \text{for all } \quad s.
\]
\end{Corollary}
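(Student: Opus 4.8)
The plan is to reduce Corollary~\ref{maybe} to Theorem~\ref{main} by estimating the quantity $c=\dim\Delta(G)$ purely in terms of the number of vertices $n$ and the number of edges $e$. First I would record that the faces of the stable complex $\Delta(G)$ are exactly the stable sets of $G$, so a stable set of cardinality $k$ is a face of dimension $k-1$. Writing $\alpha=\alpha(G)$ for the maximum cardinality of a stable set of $G$, this gives $c=\dim\Delta(G)=\alpha-1$. Thus it suffices to produce an upper bound $\alpha\le\lfloor 1/2+\sqrt{1/4+n^2-n-2e}\,\rfloor$, after which Theorem~\ref{main} yields the claim, since $\reg I(G)^s\le 2s+c=2s+\alpha-1$.

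The key step is a counting estimate on $\alpha$. Fix a stable set $S\subseteq V(G)$ with $|S|=\alpha$. By definition no $2$-element subset of $S$ is an edge of $G$, so the $\binom{\alpha}{2}$ pairs lying inside $S$ contribute no edges. Since $G$ has at most $\binom{n}{2}$ edges in total, this forces
\[
e\le \binom{n}{2}-\binom{\alpha}{2}=\frac{n^2-n}{2}-\frac{\alpha^2-\alpha}{2},
\]
which, after clearing denominators, I would rewrite as $\alpha^2-\alpha-(n^2-n-2e)\le 0$. Note in passing that $n^2-n-2e=2\bigl(\binom{n}{2}-e\bigr)\ge 0$, so the square root appearing in the statement is defined and the quadratic below has real roots.

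Finally I would solve this quadratic inequality in $\alpha$. The larger root of $t^2-t-(n^2-n-2e)$ is $\tfrac{1}{2}+\sqrt{\tfrac14+(n^2-n-2e)}$, so the inequality $\alpha^2-\alpha-(n^2-n-2e)\le 0$ gives
\[
\alpha\le \frac{1}{2}+\sqrt{\frac{1}{4}+n^2-n-2e}.
\]
Because $\alpha$ is an integer, the real bound on the right may be replaced by its floor, yielding $\alpha\le\lfloor 1/2+\sqrt{1/4+n^2-n-2e}\,\rfloor$ and hence $c\le\lfloor 1/2+\sqrt{1/4+n^2-n-2e}\,\rfloor-1$. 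Combined with Theorem~\ref{main}, this is precisely the asserted inequality. I do not anticipate any genuine obstacle: once the identification $c=\alpha-1$ is made the argument is elementary combinatorics, and the only points requiring a little care are the nonnegativity of the discriminant and the passage to the floor using the integrality of $\alpha$.
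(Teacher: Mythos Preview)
Your proposal is correct and follows the same overall approach as the paper: identify $c=\dim\Delta(G)=\alpha(G)-1$ and then bound $\alpha(G)$ in terms of $n$ and $e$ before invoking Theorem~\ref{main}. The only difference is that the paper obtains the bound $\alpha\le\lfloor 1/2+\sqrt{1/4+n^2-n-2e}\rfloor$ by citing Hansen~\cite{H}, whereas you supply the elementary counting argument (namely $e\le\binom{n}{2}-\binom{\alpha}{2}$ because the $\binom{\alpha}{2}$ pairs inside a maximum stable set are non-edges) that proves this inequality directly; your version is self-contained, while the paper's is shorter by deferring to the literature.
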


\begin{proof}
For the proof we use Theorem~\ref{main} and a famous formula of Hansen \cite{H} who showed that the  size  of a maximal stable set is bounded by $\lfloor 1/2+\sqrt{1/4+n^2-n-2e}\rfloor$.
\end{proof}

There are many other upper bounds for the size  of a maximal stable set of a graph. Well known is the bound given by Kwok  which is  given as an exercise in \cite{W}.  Kwok's upper bound is $n-e/\Delta$,  where $\Delta$ is is the maximal degree of a vertex of $G$. A survey on the known upper bounds can be found in the thesis of Willis \cite{WW}.

\medskip
Even though  $f(s)=\reg I^s$  is linear function of $s$ for $s\gg 0$ when $I$ is a graded ideal of the polynomial ring, the initial behaviour of $f(s)$ is not so well understood. In \cite{C}, Conca gives some examples for the unexpected behaviour of the function $f(s)$. On the positive side, Eisenbud and Harris \cite[Proposition 1.1]{EH}  showed that for a graded ideal $I\subset S =K[x_1,\ldots,x_n]$ with  $\height I=n$ which is generated in a single degree, say $d$, one has $f(s)=ds+b_i$ with $b_1\geq b_2\geq \cdots$. We will use this result in the proof of the next theorem.

For a monomial ideal $L\subset S$ we denote by $L^{\pol}$ the polarization of $L$, and by $S^{\pol}$ the polynomial ring in the variables which are needed to define $L^{\pol}$.

\begin{Theorem}
\label{equal}
Let $G$ be a finite simple graph with $n$ vertices, and let $$J=(I(G),x_1^2,\ldots,x_n^2).$$ Then $\reg I(G)^s\leq\reg J^s=\reg (J^{\pol})^s$ for all $s$.
\end{Theorem}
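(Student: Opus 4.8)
The plan is to prove the two assertions $\reg J^s=\reg(J^{\pol})^s$ and $\reg I(G)^s\le\reg J^s$ by computing $\reg J^s$ exactly. Since $J$ contains $x_1^2,\dots,x_n^2$, its radical is $\mm$, so $J$ is $\mm$-primary of height $n$, and it is generated by the quadratic edge generators together with the squares, hence in the single degree $2$. Thus $J$ satisfies the hypotheses of the Eisenbud--Harris result \cite[Proposition 1.1]{EH}, giving $\reg J^s=2s+b_s$ with $b_1\ge b_2\ge\cdots$. I will show $b_s=c$ for all $s$, where $c=\dim\Delta(G)$, i.e. $\reg J^s=2s+c$.

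For the upper bound I exploit the monotonicity of the $b_s$. As $S/J$ is Artinian, its regularity equals the top degree of a nonzero element, that is, the largest degree of a monomial not in $J$; these are exactly the squarefree monomials $\prod_{i\in F}x_i$ with $F$ an independent set of $G$, so the top degree is the size of a maximum independent set, namely $c+1$. Hence $\reg J=c+2$, so $b_1=\reg J-2=c$, and the inequality $b_s\le b_1$ yields $\reg J^s\le 2s+c$ for every $s$.

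For the reverse inequality I again use that $J^s$ is $\mm$-primary, so $S/J^s$ is Artinian and $\reg J^s$ is $1$ plus the largest degree of a monomial not in $J^s$. I exhibit such a monomial: fixing a maximum independent set $F$ with $|F|=c+1$ and a vertex $i_0\in F$, put $u=x_{i_0}^{2(s-1)}\prod_{i\in F}x_i$, of degree $2(s-1)+(c+1)=2s+c-1$. Membership $u\in J^s$ would require $s$ degree-two generators of $J$ whose product divides $u$; but $\supp u=F$ carries no edge of $G$, so only squares $x_i^2$ are available, and only at $i_0$, whose exponent $2s-1$ permits at most $s-1$ of them. Hence $u\notin J^s$, so $\reg J^s\ge 2s+c$, and therefore $\reg J^s=2s+c$.

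It remains to identify this value with $\reg(J^{\pol})^s$ and to dominate $\reg I(G)^s$. As in the proof of Theorem~\ref{main}, $J^{\pol}=I(G^*)$ with $G^*$ very well covered and $\nu(G^*)=c+1$; so by Jayanthan--Selvaraja \cite{JS}, $\reg(J^{\pol})^s=\reg I(G^*)^s=2s+\nu(G^*)-1=2s+c=\reg J^s$. For the last inequality I invoke the restriction lemma \cite[Lemma 4.4]{HH} exactly as before: with $\cb=(\infty,\dots,\infty,0,\dots,0)$ one has $(I(G^*)^s)^{\le\cb}=I(G)^s$, whence $\reg I(G)^s\le\reg I(G^*)^s=\reg(J^{\pol})^s=\reg J^s$. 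The step carrying the real content is the Eisenbud--Harris upper bound: it is what guarantees that $\reg J^s$ never exceeds its eventual linear value $2s+c$, i.e. that the defect sequence has no initial bump above $c$; the matching lower bound, by contrast, is the elementary Artinian computation above.
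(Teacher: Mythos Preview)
Your proof is correct and follows essentially the same route as the paper's: both use Eisenbud--Harris for the nonincreasing defect $b_s$, exhibit the same monomial $x_{i_0}^{2(s-1)}\prod_{i\in F}x_i$ to force $b_s\ge c$, identify $J^{\pol}=I(G^*)$ and invoke Jayanthan--Selvaraja, and finish with the restriction lemma. The only difference is cosmetic: you compute both $\reg J^s$ and $\reg(J^{\pol})^s$ explicitly as $2s+c$, whereas the paper argues that each of $\reg J^s-2s$ and $\reg(J^{\pol})^s-2s$ is constant in $s$ and then matches them at $s=1$ via the Betti-number-preserving property of polarization.
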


\begin{proof}
The inequality $\reg I(G)^s\leq\reg J^s$ follows from the equality $\reg J^s=\reg (J^{\pol})^s$ and the proof of Theorem~\ref{main}. Thus it remains to prove these equalities. For $s=1$, the equality holds, since polarization of an ideal does not change its graded Betti numbers. Now since $J^{\pol}=I(G^*)$,  \cite{JS}  implies that $\reg (J^{\pol})^s-2s$ is a constant function on $s$. Thus the desired result follows once we have shown that $\reg J^s-2s$ is also a constant function on $s$. Indeed we will show that $\reg J^{s} -\reg J\geq 2(s-1)$ for all $s\geq 1$. Then,  together with the result of Eisenbud and Harris,   the desired conclusion follows.

In order to prove  $\reg J^{s}- \reg J\geq  2(s-1)$ for all $s\geq 1$, we show  the following: let $F\in \Delta(G)$ a facet with $|F|=c+1$, and set $u=\prod_{i\in F}x_i$. We may assume that $x_1$ divides $u$, and consider $w=ux_1^{2(s-1)}$. Let $\mm=(x_1,\ldots,x_n)$. Since $\mm u\in J$ it follows that $\mm w\in J^s$. It remains to be shown that $w\not\in J^s$. Indeed, suppose that $w=w_1w_2\cdots w_s$ with $w_i\in J$ for $i=1,\ldots, s$. Since $x_1^{2s}$ does not divide $w$, one of the factors, say  $w_1$,  must be squarefree. Since $w_1$ divides $w$, it then follows that $w_1\not\in J$, a contradiction.
\end{proof}
It should be noted that the equalities  $\reg J^s=\reg (J^{\pol})^s$ are quite surprising because in general polarization and taking powers are not very well compatible operations.


\begin{thebibliography}{99}
\bibitem{BHT} S. Beyarslan, H.T. H\`{a} and T.N. Trung, Regularity of powers of forests and cycles. J. Algebraic
Combin. {\bf 42} (2015), no. 4, 1077--1095.

\bibitem{C} A. Conca,  Regularity jumps for powers of ideals.  Commutative algebra. Lect. Notes Pure Appl. Math {\bf 244} (2006) 21--32.


\bibitem{CHT} S.D. Cutkosky, J. Herzog, and N.V. Trung, Asymptotic behaviour of the Castelnuovo-Mumford regularity.
Composito Mathematica {\bf 118} (1999), 243--261.

\bibitem{EH} D. Eisenbud, J. Harris, Powers of ideals and fibers of morphisms. Math. Res. Lett. {\bf 17}
(2010), no. 2, 267--273.

\bibitem{H} P. Hansen,  Degr\'{e}s et nombre de stabilité d’un graphe,  Cahier Centre \'{E}tudes Recherche
Op\'{e}r. {\bf 17} (1975), no. 2-3-4, 213--220.

\bibitem{HH} J. Herzog, T. Hibi and X.  Zheng,  Monomial ideals whose powers have a linear resolution. Math. Scand. {\bf 95} (2004), no. 1, 23--32.

\bibitem{JNS} A.V. Jayanthan, N. Narayanan and S. Selvaraja, Regularity of powers of bipartite graphs. J. Algebraic
Combin. {\bf 47} (2018), no.1, 17--38.

\bibitem{JS} A.V. Jayanthan, S. Selvaraja, Asymptotic behavior of Castelnuovo-Mumford regularity of edge ideals
of very well-covered graphs. Preprint (2017), arXiv:1708.06883.

\bibitem{Ko} V. Kodiyalam, Asymptotic behaviour of Castelnuovo-Mumford regularity. Proceedings of the American
Mathematical Society, {\bf 128} (1999), no. 2, 407--411.


\bibitem{NFY} P. Norouzi, S.A. Seyed Fakhari, S. Yassemi, Regularity of powers of edge ideals of very well-covered
graphs. Preprint (2017), arXiv:1707.04874.


\bibitem{W} Douglas B. West,  Introduction to graph theory, second edition. Prentice Hall Inc.,
Upper Saddle River, NJ, 2001.

\bibitem{WW} W.  Willis,  Bounds for the independence number of a graph, Thesis, Virginia Commonwealth University  (2011).
\end{thebibliography}
\end{document}